 \newtheorem{theo}{Theorem}[section]
 \newtheorem{definition}[theo]{Definition}
  \newtheorem{lem}[theo]{Lemma}
\newtheorem{propos}[theo]{Proposition}
\newcommand{\R}{{\mathbb{R}}}
\newcommand{\loglike}[1]{\mathop{\rm #1}\nolimits}
\newcommand{\ex}{\loglike{ext}}
\newcommand{\spn}{\mathrm{span}}
\newcommand{\bea}{\begin{eqnarray*}}
\newcommand{\eea}{\end{eqnarray*}}
\newcommand{\beq}{\begin{eqnarray}}
\newcommand{\eeq}{\end{eqnarray}}
\newcommand{\supp}{{\mathrm{supp}}}
\numberwithin{equation}{section}
\begin{document}

\title[Non-expansive bijections to the unit ball of $\ell_1$-sum]{Non-expansive bijections to the unit ball of $\ell_1$-sum of  strictly convex Banach spaces}

\author[Kadets]{V. Kadets}

\address[Kadets]{ School of Mathematics and Informatics, V.N. Karazin Kharkiv  National University, 61022 Kharkiv, Ukraine
\newline  \href{http://orcid.org/0000-0002-5606-2679}{ORCID: \texttt{0000-0002-5606-2679}}}
\email{v.kateds@karazin.ua }

\author[Zavarzina]{O. Zavarzina}
\address[Zavarzina]{ School of Mathematics and Informatics, V.N. Karazin Kharkiv  National University, 61022 Kharkiv, Ukraine
\newline  \href{http://orcid.org/0000-0002-5731-6343}{ORCID: \texttt{0000-0002-5731-6343}}}
\email{olesia.zavarzina@yahoo.com}

\subjclass[2010]{46B20, 47H09}

\keywords{non-expansive map; unit ball; Expand-Contract plastic space}
\thanks{The research of the first author is done in frames of Ukrainian Ministry of Science and Education Research Program 0115U000481}
\maketitle

\begin{abstract}
Extending recent results by Cascales, Kadets, Orihuela and Wingler (2016), Kadets and Zavarzina (2017), and Zavarzina (2017) we demonstrate that for every  Banach space $X$ and every collection $Z_i, i\in I$ of strictly convex Banach spaces every non-expansive bijection from the unit ball of $X$ to the unit ball of sum of $Z_i$ by $\ell_1$  is an isometry.
\end{abstract}

%%%%%%%%%%%%%%%%%%%%%%%%%%%%%%
%%%%%%%%%%%%%%%%%%%%%%%%%%%%%%

\section{Introduction}
This article is motivated by the challenging open problem, posed by B.~Cascales, V.~Kadets, J.~Orihuela and E.J.~Wingler in 2016 \cite{CKOW2016}, whether it is true that for every Banach space $X$ its unit ball $B_X$ is \emph{Expand-Contract plastic}, in other words, whether it is true that every non-expansive bijective automorphism of $B_X$ is an isometry. It looks surprising that such a general property, if true, remained unnoticed during the long history of Banach space theory development.  On the other hand, if there is a counterexample, it is not an easy task to find it, because of known partial positive results. Namely, in finite-dimensional case the Expand-Contract plasticity of $B_X$ follows from compactness argument: it is known  \cite{NaiPioWing} that every totally bounded metric space is Expand-Contract plastic. For infinite-dimensional case, the main result of \cite{CKOW2016} ensures Expand-Contract plasticity of the unit ball of every strictly convex Banach space, in particular of Hilbert spaces and of all $L_p$ with $1 < p < \infty$. An example of not strictly convex infinite-dimensional space with the same property of the ball is presented in \cite[Theorem 1] {KZ}. This example is $\ell_1$ or, more generally, $\ell_1(\Gamma)$, where the same proof needs just minor modifications.

In this paper we ``mix'' results from \cite[Theorem 2.6] {CKOW2016} and \cite[Theorem 1] {KZ} and demonstrate  the Expand-Contract plasticity of the ball of $\ell_1$-sum of an arbitrary collection of strictly convex spaces. Moreover, we demonstrate a stronger result: for every  Banach space $X$ and every collection $Z_i, i\in I$ of strictly convex Banach spaces we prove that every non-expansive bijection from the unit ball of $X$ to the unit ball of $\ell_1$-sum of spaces $Z_i$ is an isometry. Analogous results for non-expansive bijections acting from the unit ball of an arbitrary Banach space to unit balls of finite-dimensional or strictly convex spaces, as well as to the unit ball  of $\ell_1$ were established recently in \cite{Zav}.

Our demonstration uses several ideas from preceding papers mentioned above, but elaborates them substantially in order to overcome the difficulties that appear on the way in this new, more general situation.

\section{Notations and auxiliary statements}
Before proving the corresponding theorem we will give the notations and results which we need in our exposition.

In this paper we deal with real Banach spaces. As usual, for a Banach space $E$ we denote by  $S_E$ and $B_E$ the unit sphere and the closed unit ball of $E$ respectively. A map $F\colon U \to V$ between metric spaces $U$ and $V$ is called \emph{non-expansive}, if $\rho(F(u_1), F(u_2)) \le \rho(u_1, u_2)$ for all $u_1, u_2 \in U$, so in the case of non-expansive map $F\colon B_X \to B_Z$ considered below we have $\|F(x_1) - F(x_2)\| \le \|x_1 - x_2\|$ for $x_1, x_2 \in B_X$.

 For a convex set $M \subset E$ we denote by $\ex(M)$ the set of \emph{extreme points} of $M$. Recall that $z \in \ex(M)$ if for every non-trivial line segment $[u, v]$ containing  $z$ in its interior, at least one of the endpoints $u, v$ should not belong to $M$. Recall also that a space $E$ is called \emph{strictly convex} when  $S_E = \ex(B_E)$. In strictly convex spaces the triangle inequality is strict for all pairs of vectors with different directions. That is, for every $e_1,e_2\in E$ such that $e_1\neq ke_2$, $k\in (0, +\infty)$, $\|e_1 +e_2\| < \|e_1\| + \|e_2\|$.

Let $I$ be an index set, and $Z_i, i\in I$ be a fixed collection of strictly convex Banach spaces. We consider the sum of $Z_i$ by $\ell_1$ and denote it by $Z$. According to the definition, this means that $Z$ is the set of all points $z = (z_i)_{i\in I}$, where $z_i \in Z_i, i\in I$ with  at most countable support  $\supp (z) : =\{i: z_i\neq 0\}$ and such that $\sum_{i\in I}\|z_i\|_{Z_i}< \infty$. The space $Z$ is equipped with the natural norm
\beq \label{eq:ell1gam-norm}
||z||=\|(z_i)_{i\in I}\|=\sum_{i\in I}\|z_i\|_{Z_i}.
\eeq
 Remark, that even if $I$ is uncountable, the corresponding sum in \eqref{eq:ell1gam-norm} reduces to an ordinary at most countable sum $\sum_{i\in \supp (z)}\|z_i\|_{Z_i}$, which does not depend on the order of its terms, so there is no need to introduce an ordering on $I$ and to appeal to any kind of definition for uncountable sum, when we speak about our space $Z$.

 In the sequel we will regard each $Z_i$ as a subspace of $Z$ in the following natural way:  $Z_i = \{z \in Z \colon \supp(z) \subset \{i\}\}$.  It is well-known and easy to check that in this notation
 $$
 \ex(B_Z)= \bigcup_{ i\in I} S_{Z_i}.
 $$
Remark also that under this notation each  $z \in Z$ can be written in a unique way as a sum  $z = \sum_{i\in I} z_i$,  $z_i \in Z_i$ with at most countable number of non-zero terms, and the series converges absolutely.

  \begin{definition}
 Let $E$ be a Banach space and  $H \subset E$ be a subspace. We will say that a linear projector $P\colon E \to H$ is strict if $\|P\|=1$ and for any $x\in E\setminus H$ we have $\|P(x)\|<\|x\|$.
 \end{definition}
 \begin{lem}\label{strictprojector}
 Every strict projector $P\colon E \to H$ possesses the following property:
  for every $ x \in E \setminus H$ and every $y \in H$ we have  $\|P(x-y)\|$ $ < \|x-y\|$.
 \end{lem}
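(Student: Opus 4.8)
The plan is to reduce the statement directly to the defining property of a strict projector; the only genuine point to verify is that the translated vector $x-y$ still lies outside $H$, so that the definition is applicable to it. So fix $x \in E \setminus H$ and $y \in H$ and observe that $x-y \notin H$: indeed, if $x-y$ were in $H$, then $x = (x-y) + y$ would be a sum of two elements of the subspace $H$ and hence would belong to $H$, contradicting $x \in E \setminus H$. Thus $x - y \in E \setminus H$.

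Now I would simply apply the definition of a strict projector to the vector $w := x-y \in E \setminus H$, which gives $\|P(w)\| < \|w\|$, i.e. $\|P(x-y)\| < \|x-y\|$, as claimed. (One may additionally note that, $P$ being a projector onto $H$, it acts as the identity on $H$, so $P(x-y) = P(x) - y$; but this identity is not needed for the proof.) There is no real obstacle in this argument: its entire content is the remark that $E \setminus H$ is stable under translation by elements of $H$, and one uses only the strict-inequality clause of the definition — the normalization $\|P\| = 1$ plays no role here.
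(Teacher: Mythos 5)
Your proof is correct and follows exactly the paper's own argument: translate by $y\in H$, note that $x-y$ stays outside the subspace $H$, and apply the defining strict inequality of the projector to $x-y$. The paper's proof is the same one-liner, so there is nothing to add.
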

 \begin{proof}
 If $x\notin H$ then $x-y\notin H$, and since projector $P$ is strict we get
$\|P(x-y)\|  < \|x-y\|.$
 \end{proof}

Consider a finite subset $J \subset I$ and an arbitrary collection $z = (z_i)_{i \in J}$,  $z_i \in S_{Z_i}, i \in J$. For each of these  $z_i$ pick a supporting functional  $z_i^*\in S_{{Z_i}^*}$, i.e. such a norm-one functional that  $z_i^*(z_i)=1$. The strict convexity of $Z_i$ implies that $z_i^*(x) < 1$ for all $x\in B_{Z_i} \setminus \{z_i\}$, $i \in J$.  Denote $z^* = (z_i^*)_{i \in J}$ and define the map  $P_{z, z^*} \colon Z\to  \spn\{z_i,  i\in J\}$,
$$ P_{z, z^*}((y_i)_{i\in I}) = \sum_{i\in J} z_i^*(y_i)z_i.$$

\begin{lem} \label{projector}
The map $P_{z, z^*}$ is a strict projector onto $\spn\{z_i,  i\in J\}$.
\end{lem}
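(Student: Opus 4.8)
The plan is to check, in turn, the three requirements in the definition of a strict projector: that $P := P_{z,z^*}$ is a linear projector with range $H := \spn\{z_i, i\in J\}$, that $\|P\| = 1$, and that $\|P(y)\| < \|y\|$ for every $y \in Z \setminus H$. Linearity is immediate, since each coordinate evaluation $y = (y_i)_{i\in I} \mapsto z_i^*(y_i)$ is linear. For the projector property I would use that the $z_i$, $i \in J$, lie in pairwise distinct summands $Z_i$ of the $\ell_1$-sum: a generic element of $H$ is $h = \sum_{i \in J} a_i z_i$ whose $i$-th coordinate equals $a_i z_i$ for $i \in J$ and $0$ otherwise, so $P(h) = \sum_{i\in J} a_i z_i^*(z_i) z_i = h$ because $z_i^*(z_i) = 1$. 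Hence $P$ is the identity on $H$, which gives both $P^2 = P$ and $\mathrm{range}(P) = H$.

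For the norm the key observation is again that the summands of $P(y) = \sum_{i\in J} z_i^*(y_i) z_i$ live in different spaces $Z_i$, so $\|P(y)\| = \sum_{i\in J} |z_i^*(y_i)|$; then $\sum_{i\in J} |z_i^*(y_i)| \le \sum_{i\in J}\|y_i\| \le \sum_{i\in I}\|y_i\| = \|y\|$, using $\|z_i^*\| = \|z_i\| = 1$. This yields $\|P\| \le 1$, and equality follows since $P$ fixes the norm-one vectors $z_i$, $i\in J$.

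The only step requiring genuine care — and the main (modest) obstacle — is strictness. Suppose $\|P(y)\| = \|y\|$ for some $y = (y_i)_{i\in I} \in Z$; then every inequality in the chain above collapses to an equality. From $\sum_{i\in J}\|y_i\| = \sum_{i\in I}\|y_i\|$ I obtain $y_i = 0$ for all $i \notin J$, and from $\sum_{i\in J}|z_i^*(y_i)| = \sum_{i\in J}\|y_i\|$ I obtain $|z_i^*(y_i)| = \|y_i\|$ for each $i \in J$. For a fixed $i \in J$ with $y_i \ne 0$, this means that $y_i/\|y_i\|$, or else its negative if $z_i^*(y_i) < 0$, is a point of $B_{Z_i}$ on which $z_i^*$ attains the value $1$; by the property of supporting functionals of strictly convex spaces recorded just before the definition of $P_{z,z^*}$ (namely $z_i^*(x) < 1$ for $x \in B_{Z_i}\setminus\{z_i\}$), this forces $y_i/\|y_i\| = \pm z_i$, so $y_i \in \spn\{z_i\}$. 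Putting everything together, $y \in \spn\{z_i, i\in J\} = H$, and contraposition gives $\|P(y)\| < \|y\|$ for $y \notin H$. The point most easily overlooked here is the sign bookkeeping in the last step, i.e.\ handling the possibility $z_i^*(y_i) = -\|y_i\|$; otherwise the argument is routine.
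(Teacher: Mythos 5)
Your proof is correct and follows essentially the same route as the paper: the same $\ell_1$-norm chain $\|P(y)\|=\sum_{i\in J}|z_i^*(y_i)|\le\sum_{i\in J}\|y_i\|\le\|y\|$ and the same use of strict convexity through the supporting functionals to handle the equality case. The only difference is cosmetic: you argue strictness by contraposition (equality forces $y\in\spn\{z_i,\ i\in J\}$), with the $\pm$ sign made explicit, whereas the paper argues directly by splitting into the cases $y_N\neq 0$ for some $N\notin J$ and $y_j\notin\spn\{z_j\}$ for some $j\in J$.
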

\begin{proof}%[Proof of Lemma \ref{projector}]
According to definition, we have to check that
\begin{enumerate}
\item $P_{z, z^*}$ is a projector on $\spn\{z_i,  i\in J\}$.
\item $\|P_{z, z^*}\| = 1$.
\item  If $(y_i)_{i\in I}\notin \spn\{z_i,  i\in J\}$ then $\|P_{z, z^*}((y_i)_{i\in I})\|  < \|(y_i)_{i\in I}\|$.
\end{enumerate}
\noindent \textbf{Demonstration of (1).} This is true since
\begin{align*}
P_{z, z^*}^2((y_i)_{i\in I})&=P_{z, z^*}\left(\sum_{i\in J} z_i^*(y_i)z_i\right) = \sum_{i\in J}  z_i^*\left( z_i^*(y_i)z_i \right) z_i \\
&= \sum_{i\in J} z_i^*(y_i) z_i^*( z_i ) z_i = \sum_{i\in J} z_i^*(y_i)z_i = P_{z, z^*}((y_i)_{i\in I}).
\end{align*}

\noindent \textbf{Demonstration of (2).}  One may write
\begin{align} \label{align_1}
\nonumber \quad \quad  \|P_{z, z^*}((y_i)_{i\in I})\| &= \left\|\sum_{i\in J} z_i^*(y_i)z_i\right\|=\sum_{i\in J}|z_i^*(y_i)|  \\
&\leq \sum_{i\in J}\|y_i\| \leq \sum_{i\in I}\|y_i\|  = \|(y_i)_{i\in I}\|.
\end{align}

\noindent \textbf{Demonstration of (3).} If  there is $N \in I \setminus J$ such that $y_N \neq 0$ the item is obvious by the second line in \eqref{align_1}.  If $y_N=0$ for all $N \in I \setminus J$  then since $y = \sum_{i\in J}y_i \notin  \spn\{z_i,  i\in J\}$ there is a $j \in J$ such that $y_j \notin \spn\{z_j\}$ and consequently $|z_j^*(y_j)|< \|y_j\|$ for this $j$. Thus, the inequality \eqref{align_1}  becomes strict when we pass from its first line to the second one.
\end{proof}

\begin{propos}[Brower's invariance of domain principle \cite{Brouwer1912}] \label{Brower}
Let $U$ be an open subset of $\R^n$ and $f : U \to \R^n$ be an injective continuous map, then $f(U)$ is open in $\R^n$.
\end{propos}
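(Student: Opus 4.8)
The plan is to give the standard topological proof; since this is Brouwer's classical theorem one could of course simply invoke \cite{Brouwer1912}, but here is the route I would follow. First I would reduce the statement to a local assertion about balls: fix $x_0\in U$ and a radius $r>0$ with the closed ball $\overline{B}(x_0,r)\subseteq U$, and show that $f(x_0)$ is an interior point of $f(\overline{B}(x_0,r))$. Running this at every point of $U$ and for arbitrarily small $r$ then shows that $f$ maps each open subset of $U$ onto an open set, in particular $f(U)$ itself. After a translation and a rescaling I may assume $x_0=0$, $f(0)=0$, and $r=1$; write $B$ for the open unit ball and $S=\partial B$. Since $f$ is a continuous injection on the compact set $\overline{B}$, it is a homeomorphism onto $K:=f(\overline{B})$, so its inverse $g\colon K\to\overline{B}$ is continuous, and by the Tietze extension theorem (applied to each coordinate) I would extend $g$ to a continuous map $G\colon\R^n\to\R^n$ with $G\circ f=\mathrm{id}_{\overline{B}}$.

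Next I would bring in the Brouwer degree. Injectivity of $f$ gives $f(0)\notin f(S)$, so $\deg(f,B,f(0))$ is defined, and the connected component $V$ of $\R^n\setminus f(S)$ containing $f(0)$ is open. The identity $G\circ f=\mathrm{id}_{\overline{B}}$ together with the multiplicativity of the degree under composition yields a factorization of $\deg(\mathrm{id},B,0)=1$ as a product of two integer degrees, one of which is $\deg(f,B,f(0))$; hence $\deg(f,B,f(0))=\pm1$. Since the degree is constant on the component $V$, it equals $\pm1\neq 0$ throughout $V$, and the solvability property of a nonzero degree forces $V\subseteq f(B)$. Thus $V$ is an open neighbourhood of $f(0)$ contained in $f(B)\subseteq f(U)$, which is exactly what was needed. (One can replace the degree bookkeeping by an excision computation of the local homology groups $H_n(\R^n,\R^n\setminus\{x\})$, or deduce everything from the no-retraction theorem; all three variants carry the same content.)

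The reduction to small balls, the compactness argument producing a homeomorphism onto the image, and the Tietze extension are all routine, and I would not dwell on them. The genuine obstacle — and the only place where nontrivial topology is unavoidable — is the package of facts about the Brouwer degree that the argument uses: homotopy invariance (hence local constancy in the target point), the composition formula, and the implication that a nonzero degree makes the equation $f(x)=y$ solvable. Each of these is logically of the same strength as Brouwer's fixed point theorem, so this is the hard core that the whole proof rests on.
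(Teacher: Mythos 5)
The paper does not prove this proposition at all: it is imported as Brouwer's classical invariance-of-domain theorem with the citation \cite{Brouwer1912} and then used as a black box in the proof of Lemma \ref{lem-old-claim}, so the comparison here is with the literature rather than with an argument in the paper. Your degree-theoretic outline is the standard modern proof and is essentially correct: reduce to a closed ball, use compactness to get a homeomorphism onto the image, extend the inverse by Tietze to $G$ with $G\circ f=\mathrm{id}_{\overline{B}}$, and exploit the nonvanishing of the degree. One point you gloss over: the composition formula does not literally factor $1=\deg(\mathrm{id},B,0)$ as a product of two degrees; it gives a sum $\sum_{D}\deg(f,B,D)\,\deg(G,D,0)$ over the bounded components $D$ of $\R^n\setminus f(S)$. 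To collapse this to one term, note that $f(B)$ is connected and disjoint from $f(S)$, hence contained in a single component $V$, and that $\deg(f,B,D)=0$ for every other component $D$ since such $D$ misses $f(\overline{B})$; then $1=\deg(f,B,V)\deg(G,V,0)$, so $\deg(f,B,f(0))=\pm 1$ and the solvability property gives $V\subseteq f(B)$, exactly as you want. With that remark your argument is complete, and it offers what the bare citation does not: a self-contained proof resting only on the standard package of degree properties (or, equivalently, on local homology or the no-retraction theorem), whose use is legitimate since none of them depends on invariance of domain itself.
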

\begin{propos}[{\cite[Proposition 4]{KZ}}] \label{prop-surject}
Let $X$ be a  finite-dimensional normed space and $V$ be a subset of $B_X$ with the following two properties:  $V$ is homeomorphic to $B_X$ and $V \supset S_X$. Then $V=B_X$.
\end{propos}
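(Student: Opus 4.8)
My plan is to argue by contradiction, reducing the statement to the impossibility of a single ``missing point'' of $V$, and to derive that impossibility from Brouwer's invariance of domain (Proposition~\ref{Brower}) together with the connectedness of $\mathrm{int}(B_X)$. Identify $X$ with $\R^n$, $n=\dim X$; the cases $n\le 1$ are immediate (then $B_X$ is a point or a segment and a compact connected subset of it containing $S_X$ must be everything), so assume $n\ge 2$, so that $\mathrm{int}(B_X)$ is a nonempty connected open set and $S_X\cong S^{n-1}$ is connected. Since $B_X$ is compact and $V$ is a homeomorphic image of it, $V$ is compact, hence closed in $\R^n$; and since $S_X\subseteq V$, the set $B_X\setminus V$ is contained in $\mathrm{int}(B_X)$ and is therefore open in $\R^n$. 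It suffices to show $B_X\setminus V=\emptyset$.

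Fix a homeomorphism $h\colon B_X\to V$. Its restriction to the open set $\mathrm{int}(B_X)$ is a continuous injection into $\R^n$, so by Proposition~\ref{Brower} the image $h(\mathrm{int}(B_X))$ is open in $\R^n$; being contained in $B_X$ it is contained in $\mathrm{int}(B_X)$, and in particular it is disjoint from $S_X$. Since $V=h(\mathrm{int}(B_X))\cup h(S_X)$ is a disjoint union and $S_X\subseteq V$, this forces $S_X\subseteq h(S_X)$.

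The crucial --- and most delicate --- step is to upgrade this to $h(S_X)=S_X$; here the plain $\R^n$-version of Proposition~\ref{Brower} no longer suffices and one needs its consequence for $(n-1)$-dimensional manifolds (equivalently, the Jordan--Brouwer separation theorem), which is exactly what rules out $h$ pushing a piece of the sphere into the interior of the ball. From $S_X\subseteq h(S_X)$ one gets $h^{-1}(S_X)\subseteq S_X$, so $h^{-1}$ restricts to a continuous injection of the sphere $S_X\cong S^{n-1}$ into itself; read in local charts of $S_X$, invariance of domain in dimension $n-1$ shows this restriction is an open map, so $h^{-1}(S_X)$ is open in $S_X$, and being compact it is also closed, whence $h^{-1}(S_X)=S_X$ by connectedness of $S_X$. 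Applying $h$ yields $h(S_X)=S_X$, and therefore $h(\mathrm{int}(B_X))=V\setminus S_X$.

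Finally, connectedness closes the argument: $\mathrm{int}(B_X)=(V\setminus S_X)\cup(B_X\setminus V)$ exhibits the connected set $\mathrm{int}(B_X)$ as a disjoint union of two sets that are open in $\R^n$ --- the first equal to $h(\mathrm{int}(B_X))$, the second open by the first paragraph --- the first of which is nonempty (it contains $h$ of any interior point). Hence $B_X\setminus V=\emptyset$, that is, $V=B_X$.
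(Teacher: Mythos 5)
Your argument is correct. Note that this paper does not actually prove Proposition~\ref{prop-surject} --- it is imported from \cite[Proposition 4]{KZ} --- so there is no in-paper proof to compare against; judged on its own, your chain of steps (openness of $h(\mathrm{int}B_X)$ by Proposition~\ref{Brower}, hence $S_X\subset h(S_X)$; then $h(S_X)=S_X$; then the decomposition of the connected set $\mathrm{int}B_X$ into two disjoint open pieces) is sound, including the low-dimensional cases you dispose of first. The one place where you use more than Proposition~\ref{Brower} as literally stated is the step $h(S_X)=S_X$, which needs invariance of domain for $(n-1)$-dimensional manifolds applied to the continuous injection $h^{-1}|_{S_X}\colon S_X\to S_X$; you flag this, and the reduction to the Euclidean case by reading the map in charts of $S_X\cong S^{n-1}$ is standard, so this is a legitimate (if slightly heavier) ingredient --- though the parenthetical ``equivalently, the Jordan--Brouwer separation theorem'' is loose and could simply be dropped. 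For comparison, a frequently used shorter route to the same proposition avoids the sphere-to-sphere analysis altogether: if some $p\in \mathrm{int}B_X$ were missing from $V$, the radial retraction of $B_X\setminus\{p\}$ onto $S_X$ centered at $p$ would restrict to a retraction of $V$ onto $S_X$ (using $S_X\subset V$), exhibiting the non-contractible sphere $S^{n-1}$ as a retract of $V\cong B_X$, a contradiction; your proof buys independence from homotopy-theoretic input at the cost of the manifold version of invariance of domain, and stays closer to the toolkit this paper explicitly states.
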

\begin{propos}[P. Mankiewicz \cite{mank}] \label{Mankiewicz}
If $X, Y$ are real Banach spaces,  $A\subset X$ and $B \subset Y$ are convex with non-empty interior, then every bijective isometry $F : A \to B$ can be extended to a bijective affine isometry $\tilde F : X \to Y $.
\end{propos}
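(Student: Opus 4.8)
The plan is to show that $F$ is the restriction of an affine isometry, the core being that $F$ preserves midpoints locally inside $A$. First I would pass to closures: $F$ and $F^{-1}$ are $1$-Lipschitz, hence uniformly continuous, and $X,Y$ are complete, so $F$ extends to a bijective isometry $\bar F\colon\overline A\to\overline B$ of the closed convex bodies; thus I may assume $A,B$ closed. The target reduces to proving (i) $F$ is affine on $\mathrm{int}(A)$, say $F(x)=Tx+c$ with $T$ a linear isometry. Granting (i), the affine map $\tilde F(x):=Tx+c$ agrees with $F$ on $A\subseteq\overline{\mathrm{int}(A)}$ by continuity and so extends it, while applying (i) to $F^{-1}$ gives a linear isometry inverse to $T$, so $T$ and $\tilde F$ are bijective. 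Since local midpoint preservation together with continuity of $F$ forces $F$ to satisfy Jensen's equation on balls, hence to be affine there, and since $\mathrm{int}(A)$ is connected, statement (i) will follow once I prove $F\bigl(\tfrac{a+b}{2}\bigr)=\tfrac{F(a)+F(b)}{2}$ for $a,b\in\mathrm{int}(A)$ lying in a common small ball and then patch the local affine pieces.

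To obtain midpoint preservation in a way that survives passage through the isometry even when $Y$ is not strictly convex, I would use the Chebyshev-center description of the midpoint. Set $M_1(a,b)=\{z\in A\colon\|z-a\|=\|z-b\|=\tfrac12\|a-b\|\}$ and $M_{n+1}=\{z\in M_n\colon\|z-w\|\le\tfrac12\diam(M_n)\text{ for all }w\in M_n\}$. Then $\diam(M_{n+1})\le\tfrac12\diam(M_n)$, while the algebraic midpoint $m=\tfrac{a+b}{2}$ lies in every $M_n$, since each $M_n$ is symmetric about $m$ and $m$ is a Chebyshev center of a set symmetric about it; hence $\bigcap_nM_n=\{m\}$. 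Each $M_n$ is defined solely by mutual distances, so the bijective isometry $F$ carries $M_n(a,b)$ onto $M_n(F(a),F(b))$ and therefore $m$ onto $\tfrac{F(a)+F(b)}{2}$ — as long as neither boundary truncates the construction. Because $M_1(a,b)\subseteq\overline B_X\bigl(m,\tfrac12\|a-b\|\bigr)$, there is no truncation in $A$ once this ball lies in $A$, i.e. once $m$ is deep enough in $A$; the analogous proviso in $B$ is that $\tfrac{F(a)+F(b)}{2}$ be correspondingly deep in $B$.

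The main obstacle is exactly this image-side depth, i.e. that $F$ sends points deep in $A$ to points deep in $B$. I would break the circularity with a Baire-category argument. Writing $A_n=\{x\in A\colon\dist(x,X\setminus A)\ge\tfrac1n\}$, the sets $F(A_n)$ are closed in $B$ and cover $B$, so some $F(A_n)$ has non-empty interior relative to the Baire space $\mathrm{int}(B)$ and thus contains a $Y$-ball; choosing $b^*$ in that ball and $a^*=F^{-1}(b^*)$ yields an interior point $a^*$ of $A$ whose image $b^*$ is interior in $B$. For $a,b$ in a small ball about $a^*$ the midpoint $\tfrac{a+b}{2}$ stays deep in $A$ and, since $F$ is $1$-Lipschitz, $\tfrac{F(a)+F(b)}{2}$ stays in a small ball about $b^*$, hence deep in $B$; both provisos hold, so the construction of the previous paragraph applies and $F$ is affine on a ball $U_0\ni a^*$, with linear part a surjective linear isometry.

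Finally I would propagate. Let $\Omega$ be the set of points of $\mathrm{int}(A)$ near which $F$ is affine with surjective-isometry linear part; it is open and contains $a^*$. An affine surjective isometry preserves the depth function $\dist(\,\cdot\,,X\setminus A)$, so near any point approached from within $\Omega$ the images stay uniformly deep in $B$; the argument of the third paragraph then reapplies and shows $\Omega$ is closed in $\mathrm{int}(A)$ as well. By connectedness $\Omega=\mathrm{int}(A)$, the local linear parts agree on overlaps and glue to a single pair $T,c$, establishing (i); the reductions of the first paragraph then finish the proof.
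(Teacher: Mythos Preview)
The paper does not prove this proposition at all; it is quoted from Mankiewicz's 1972 paper \cite{mank} and used as a black box in the proof of Lemma~\ref{lem-old-claim}. So there is no in-paper argument to compare against.

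Your outline follows the classical route, and the first three stages are essentially sound. The iterated sets $M_n$ do single out the algebraic midpoint provided the ball $\bar B\bigl(\tfrac{a+b}{2},\tfrac12\|a-b\|\bigr)$ lies in $A$ and its counterpart lies in $B$; the Baire argument legitimately produces one pair $(a^*,b^*)$ at which both depth conditions hold, yielding a ball $U_0$ on which $F$ is affine. (The surjectivity of the local linear part $T$ that you assert in the third paragraph is not automatic; it can be obtained by running the same local argument for $F^{-1}$ near $b^*$ and comparing, but you only invoke $F^{-1}$ later, in the first paragraph.)

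The genuine gap is in the propagation step. You claim that ``an affine surjective isometry preserves the depth function $\dist(\,\cdot\,,X\setminus A)$'', and infer that images of points near $\partial\Omega$ stay uniformly deep in $B$. But knowing that $F$ agrees with a bijective affine isometry $\tilde F$ on the connected piece $\Omega_0$ only yields $\dist(\tilde F(x),Y\setminus\tilde F(A))=\dist(x,X\setminus A)$; it tells you nothing about $\dist(F(x),Y\setminus B)$ unless you already know $\tilde F(A)=B$, which is precisely the conclusion you are after. From local affineness alone you get only $\dist(F(x),Y\setminus B)\ge r$ with $r$ the radius of some ball about $x$ contained in $\Omega_0$, and this may shrink to $0$ as $x\to\partial\Omega_0$. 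Hence the closedness of $\Omega$ in $\mathrm{int}(A)$ is not established, and the argument is circular as written. What is missing is an independent proof that $F(\mathrm{int}(A))\subset\mathrm{int}(B)$ --- equivalently, a purely metric characterisation of interior points of a convex body preserved by bijective isometries --- and supplying this is exactly where the real substance of Mankiewicz's theorem lies.
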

\begin{propos}[Extracted from {\cite[Theorem 2.3]{CKOW2016}} and {\cite[Theorem 2.1]{Zav}}] \label{lem-old-old}
Let $F: B_X \to B_Y$ be a non-expansive bijection.  Then
\begin{enumerate}
\item  $F(0) = 0$.
\item  $F^{-1}(S_Y) \subset S_X$.
\item  If $F(x)$ is an extreme point of $B_Y$, then $F(ax) = a F(x)$ for all $a \in (-1,1)$.
%\item  If $F(x)$ is an extreme point of $B_Y$, then $x$ is also an extreme point of $B_X$.
\end{enumerate}
\end{propos}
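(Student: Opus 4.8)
The plan is to prove items (1)--(3) in order, using nothing beyond the hypothesis that $F\colon B_X\to B_Y$ is a non-expansive bijection together with elementary normed-space geometry; item (1) carries the real weight and the other two rest on it. Throughout I write $B(y,r)=\{v\in Y\colon\|v-y\|\le r\}$ for closed balls.

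For (1) I would argue by contradiction. Put $y_0:=F(0)$ and suppose $y_0\ne 0$. Non-expansiveness gives $\|F(x)-y_0\|\le\|x\|\le 1$ for every $x\in B_X$, i.e.\ $F(B_X)\subseteq B(y_0,1)$; since $F$ is onto $B_Y$, this forces $B_Y\subseteq B(y_0,1)$. But $-y_0/\|y_0\|\in S_Y\subseteq B_Y$ lies at distance $1+\|y_0\|>1$ from $y_0$, a contradiction; hence $F(0)=0$. Item (2) is then immediate: for $y\in S_Y$ and $x=F^{-1}(y)$ we get $1=\|y\|=\|F(x)-F(0)\|\le\|x\|\le 1$, so $x\in S_X$.

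For (3), set $e:=F(x)$, assumed extreme in $B_Y$; by (2), $\|x\|=1$, and $-e$ is extreme as well. The geometric core I would isolate is: \emph{for an extreme point $e$ of $B_Y$ and $\lambda\in[0,1]$, $B(0,\lambda)\cap B(e,1-\lambda)=\{\lambda e\}$.} Indeed, any $v$ in this intersection obeys $\|v\|+\|v-e\|\le 1=\|e\|$, forcing equality throughout; for $\lambda\in(0,1)$ this exhibits $e$ as the proper convex combination $\lambda\cdot(v/\lambda)+(1-\lambda)\cdot((e-v)/(1-\lambda))$ of two unit vectors, so extremality of $e$ gives $v/\lambda=e$, i.e.\ $v=\lambda e$ (the endpoints $\lambda\in\{0,1\}$ are trivial). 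Applying this with $v=F(ax)$, $\lambda=a$, $a\in[0,1)$ --- valid because $\|F(ax)-F(0)\|\le a$ and $\|F(ax)-F(x)\|\le 1-a$ --- yields $F(ax)=ae$ for all $a\in[0,1)$. Since $-e$ is extreme with (unique) preimage $x^-:=F^{-1}(-e)\in S_X$, the same argument applied to $x^-$ gives $F(ax^-)=-ae$ for $a\in[0,1)$.

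It remains to treat $a\in(-1,0)$, and for this I would first show $x^-=-x$. Let $u:=\tfrac12(x+x^-)$. From $F(\tfrac12 x)=\tfrac12 e$ and $F(\tfrac12 x^-)=-\tfrac12 e$ together with non-expansiveness, $\|F(u)-\tfrac12 e\|\le\|u-\tfrac12 x\|=\tfrac12\|x^-\|=\tfrac12$ and likewise $\|F(u)+\tfrac12 e\|\le\tfrac12$, so $F(u)\in B(\tfrac12 e,\tfrac12)\cap B(-\tfrac12 e,\tfrac12)$. The same extreme-point computation (now writing $e=\tfrac12(e-2F(u))+\tfrac12(e+2F(u))$ with both summands unit vectors) shows that intersection is $\{0\}$; hence $F(u)=0=F(0)$, and injectivity of $F$ gives $u=0$, i.e.\ $x^-=-x$. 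Finally, for $a\in(-1,0)$ I write $ax=(-a)x^-$ with $-a\in(0,1)$ and apply the already-established positive case to $x^-$: $F(ax)=F((-a)x^-)=(-a)(-e)=ae=aF(x)$, completing (3).

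The step I expect to be the main obstacle is the range $a\in(-1,0)$ in item (3): the natural attempt to let $a\downarrow-1$ by continuity fails, since it presupposes $F(-x)=-e$. What rescues it is an essential use of \emph{bijectivity} --- the extreme point $-e$ has a unique preimage $x^-$, and the midpoint $\tfrac12(x+x^-)$ is pinned to $0$ by the lens identity, so that $x^-=-x$ is forced rather than assumed.
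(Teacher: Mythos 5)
Your proof is correct, and since the paper itself gives no argument for this proposition (it is quoted from the cited works of Cascales--Kadets--Orihuela--Wingler and Zavarzina), the comparison is with those sources, whose reasoning you essentially reproduce: the ball-inclusion argument for $F(0)=0$, the ``lens'' identity $B(0,\lambda)\cap B(e,1-\lambda)=\{\lambda e\}$ for an extreme point $e$ to settle $a\in[0,1)$, and the midpoint trick pinning $F^{-1}(-e)=-x$ to handle $a\in(-1,0)$. No gaps; the only tacit facts you use (extreme points lie on $S_Y$, and $-e$ is extreme whenever $e$ is) are standard and harmless.
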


\begin{lem}[{\cite[Lemma 2.3]{Zav}}] \label{conv-smooth-prel}
  Let $X, Y$ be Banach spaces, $F\colon B_X \to B_Y$ be a bijective non-expansive map such that $F(S_X) = S_Y$. Let $V \subset S_X$ be such a subset that $F(av) = a F(v)$ for all $a \in [-1,1]$, $v \in V$.  Denote $A = \{tx: x \in V, t \in [-1,1] \}$, then $F|_A$ is a bijective  isometry between $A$ and $F(A)$.
\end{lem}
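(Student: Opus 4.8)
The plan is to separate the trivial structural content from the single hard inequality. Bijectivity of $F|_A$ onto $F(A)$ is immediate, since $F$ is a bijection of $B_X$, so its restriction to any subset is injective and surjects onto the image by definition. Non-expansiveness of $F$ gives $\|F(p) - F(q)\| \le \|p - q\|$ for all $p, q \in A$. Hence the whole problem reduces to proving the reverse inequality $\|F(p) - F(q)\| \ge \|p-q\|$ on $A$. Before attacking it, I would record the elementary consequences of the hypothesis $F(av) = aF(v)$: taking $a = 0$ gives $F(0) = 0$ (matching Proposition \ref{lem-old-old}(1)); taking $a = -1$ shows $F$ is odd on $A$; and since $v \in V \subset S_X$ forces $F(v) \in S_Y$ (because $F(S_X) = S_Y$), we get $\|F(tv)\| = |t| = \|tv\|$, so $F$ is norm-preserving on $A$ and carries each segment $[-v,v]$ linearly onto $[-F(v), F(v)]$.

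Next I would reduce the reverse inequality to a two-ray statement. Any $p, q \in A$ lie on rays through some $v_1, v_2 \in V$, say $p = sv_1$, $q = tv_2$ with $s,t \in [-1,1]$, and writing $w_i = F(v_i)$ the goal becomes $\|sw_1 - tw_2\| \ge \|sv_1 - tv_2\|$ for every admissible $s,t$. Equivalently, the linear correspondence $v_i \mapsto w_i$ must be an isometry between the relevant portions of the planes $\spn\{v_1,v_2\}$ and $\spn\{w_1,w_2\}$. I stress that norm preservation and ray-linearity alone are \emph{not} enough to force this: they do not by themselves forbid $F$ from ``folding'' two rays strictly closer together, so the bijectivity of $F$ on the whole ball must enter in an essential way.

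The hard part is exactly this non-folding, i.e. proving that no pair of points of $A$ is strictly contracted. The obstruction is structural: non-expansiveness only controls $F$ from above (equivalently $F^{-1}$ from below, since $\|F^{-1}(y_1) - F^{-1}(y_2)\| \ge \|y_1 - y_2\|$), whereas the reverse inequality is the assertion that $F^{-1}$ is non-expansive on $F(A)$, and nothing local yields an upper bound on $F^{-1}$. My plan to supply this is to exploit surjectivity of $F$ onto $B_Y$ together with a topological argument. Observing that $F^{-1}$ is itself radially linear and norm-preserving on $F(A)$ (from $F^{-1}(sw) = sF^{-1}(w)$) and that $F^{-1}(S_Y) = S_X$, I would pass to the separable closed subspaces generated by the countably many points involved, and inside a finite-dimensional reduction invoke Brouwer's invariance of domain (Proposition \ref{Brower}) together with the covering principle of Proposition \ref{prop-surject}: a strict contraction of some ray would, through the radial and norm-preserving structure, leave a relatively open piece of $B_Y$ outside the image of $F$, contradicting surjectivity. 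Once the reverse inequality is established, $F|_A$ is an isometry, and this is precisely the step that feeds into Mankiewicz's theorem (Proposition \ref{Mankiewicz}) in the applications. I expect this non-folding argument to be the genuine difficulty; everything else is bookkeeping around the radial-linearity hypothesis.
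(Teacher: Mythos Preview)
First, note that the paper does not prove this lemma itself; it is quoted from \cite[Lemma~2.3]{Zav} and used as a black box. So there is no in-paper argument to compare your attempt against, but your plan can still be assessed on its own merits.

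Your reduction is correct: bijectivity of $F|_A$ onto its image and the inequality $\|F(p)-F(q)\|\le\|p-q\|$ are trivial, and the entire content is the reverse inequality $\|F(p)-F(q)\|\ge\|p-q\|$ for $p,q\in A$. You are also right that radial linearity and norm preservation on $A$ alone cannot exclude ``folding'' of two rays, so the global bijectivity of $F$ must enter.

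The gap is in how you propose to use it. You claim that a strict contraction between two rays ``would \ldots leave a relatively open piece of $B_Y$ outside the image of $F$, contradicting surjectivity''. But $F$ is surjective onto all of $B_Y$ by hypothesis, so nothing is ever missed. What you presumably have in mind is that $F$ restricted to the two-dimensional slice $\spn\{v_1,v_2\}\cap B_X$ would fail to cover $\spn\{w_1,w_2\}\cap B_Y$; that, however, is not a contradiction, because nothing forces $F$ to carry the first slice into the second. The radial hypothesis pins down $F$ only on the two diameters $\{tv_1:|t|\le1\}$ and $\{tv_2:|t|\le1\}$, not on the full two-dimensional disk they span. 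In Lemma~\ref{lem-old-claim} the analogous inclusion $F(U_n)\subset V_n$ is obtained from the special $\ell_1$-structure of the target and the strict projectors of Lemma~\ref{projector}; none of that machinery is available in the abstract setting of the present lemma, so Propositions~\ref{Brower} and~\ref{prop-surject} have nothing to act on. Your ``finite-dimensional reduction'' simply does not exist here, and the sketched contradiction never materialises. The proof in \cite{Zav} is a direct metric argument rather than a topological one.
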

\begin{lem}\label{preim-of-sph}
 Let $X, Y$ be real Banach spaces,  $F: B_X \to B_Y$ be a bijective non-expansive map such that for
every $v \in F^{-1}(S_Y)$ and every $t \in [-1,1]$ the condition
$F(tv) = t F(v)$ holds true. Then $F$ is an isometry.
\end{lem}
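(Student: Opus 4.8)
The plan is to deduce the statement from Lemma~\ref{conv-smooth-prel} once we know that $F$ maps $S_X$ \emph{onto} $S_Y$. Since $F\colon B_X\to B_Y$ is a non-expansive bijection, Proposition~\ref{lem-old-old} applies and gives $F(0)=0$ together with $F^{-1}(S_Y)\subset S_X$. Reading the second inclusion in the other direction, $S_Y=F\bigl(F^{-1}(S_Y)\bigr)\subset F(S_X)$, so everything reduces to proving the reverse inclusion $F(S_X)\subset S_Y$.

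This sphere-to-sphere step is, to my mind, the only point that needs an idea, and it is where the hypothesis really gets used. I would fix $x\in S_X$ and put $r=\|F(x)\|$. Since $F$ is injective and $F(0)=0$, we have $F(x)\neq 0$, hence $r>0$; suppose for contradiction that $r<1$. Then $F(x)/r\in S_Y$, so $v:=F^{-1}\bigl(F(x)/r\bigr)$ belongs to $F^{-1}(S_Y)\subset S_X$, and applying the hypothesis of the lemma to this $v$ and to $t=r\in(0,1)$ yields
\[
F(rv)=rF(v)=r\cdot\frac{F(x)}{r}=F(x).
\]
Injectivity of $F$ then forces $rv=x$, which is impossible because $\|rv\|=r<1=\|x\|$. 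Therefore $r=1$, i.e. $F(x)\in S_Y$; as $x\in S_X$ was arbitrary, $F(S_X)\subset S_Y$, and combining with the previous paragraph $F(S_X)=S_Y$, equivalently $F^{-1}(S_Y)=S_X$.

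Finally I would invoke Lemma~\ref{conv-smooth-prel} with $V=S_X$. Its hypotheses are satisfied: $F$ is a non-expansive bijection, $F(S_X)=S_Y$ has just been established, and $F(av)=aF(v)$ for all $a\in[-1,1]$ and all $v\in V=S_X=F^{-1}(S_Y)$ is precisely the assumption of the present lemma. The associated set is $A=\{tx\colon x\in S_X,\ t\in[-1,1]\}=B_X$, so the lemma gives that $F=F|_{A}$ is a bijective isometry from $B_X$ onto $F(B_X)=B_Y$, which is exactly the assertion (the degenerate case $X=\{0\}$ being trivial). Thus the whole argument rests on the short injectivity computation in the middle paragraph; the rest is bookkeeping plus the cited Proposition~\ref{lem-old-old} and Lemma~\ref{conv-smooth-prel}.
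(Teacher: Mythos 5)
Your proposal is correct and follows essentially the same route as the paper: Proposition~\ref{lem-old-old} for $F(0)=0$ and $F^{-1}(S_Y)\subset S_X$, then the normalization-and-injectivity argument (the paper writes it directly with $\hat x=F^{-1}(F(x)/\|F(x)\|)$ rather than by contradiction, but it is the same computation), and finally Lemma~\ref{conv-smooth-prel} with $V=S_X$, $A=B_X$. No gaps; your explicit remark that $F(x)\neq0$ is a small point the paper leaves implicit.
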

\begin{proof}
According to Proposition \ref{lem-old-old} $F(0) = 0$ and $F^{-1}(S_Y) \subset S_X$. Let us first show  that  $F(S_X) \subset S_Y$, that is $F(S_X) = S_Y$.

For arbitrary $x\in S_X$ consider the point $y = \frac{F(x)}{\|F(x)\|}\in S_Y$ and define $\hat{x} = F^{-1}(y)$. Then, denoting $t = \|F(x)\|$ we get
$$
F(x) = ty = tF(\hat{x}) = F(t\hat{x}).
$$
By injectivity, this implies $x = t \hat{x}$. Since $\|\hat{x}\|=1=\|x\|$, we have that $ \|F(x)\| = t =1$, that is $F(x) \in S_Y$.

Now we may apply Lemma \ref{conv-smooth-prel} to $V = F^{-1}(S_Y) = S_X$ and   $A = \{tx: x \in S_X, t \in [-1,1] \} = B_X$.  Then $F(A) = B_Y$,  so  Lemma \ref{conv-smooth-prel} says that $F$ is an isometry.
\end{proof}

\section{Main result}

 \begin{theo}\label{teo:X+Y}
Let $X$ be a Banach space, $Z_i, i\in I$ be a fixed collection of strictly convex Banach spaces, $Z$ be the  $\ell_1$-sum of the collection  $Z_i, i\in I$, and $F\colon B_X \to B_Z$ be a non-expansive bijection. Then $F$ is an isometry.
\end{theo}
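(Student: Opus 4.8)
The plan is to reduce everything to the single radial identity $F(tv)=tF(v)$ for all $v\in F^{-1}(S_Z)$ and all $t\in[-1,1]$, since Lemma~\ref{preim-of-sph} then immediately yields that $F$ is an isometry. By Proposition~\ref{lem-old-old} we already have $F(0)=0$, $F^{-1}(S_Z)\subset S_X$, and the identity whenever $z:=F(v)$ is extreme in $B_Z$; as $\ex(B_Z)=\bigcup_{i\in I}S_{Z_i}$, this settles all $v$ for which $\supp(z)$ is a singleton (the endpoints $t=\pm1$ following from continuity of $F$). So the whole difficulty lies in the $v$ with $z=F(v)\in S_Z$ and $|\supp(z)|\ge2$.

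I would first treat the case $\supp(z)=J$ finite, $|J|=n\ge2$. Write $z=\sum_{i\in J}z_i$, set $u_i=z_i/\|z_i\|\in S_{Z_i}$ and $H=\spn\{u_i:i\in J\}$; with the norm inherited from $Z$ the subspace $H$ is linearly isometric to $\ell_1^n$, and choosing supporting functionals $u_i^*$ produces the strict projector $P=P_{u,u^*}$ of Lemma~\ref{projector}. For $t\in[0,1]$ the estimate $1=\|z\|\le\|z-F(tv)\|+\|F(tv)\|\le(1-t)+t=1$ forces equality; expanding the middle norm coordinatewise in the $\ell_1$-sum and using that in each strictly convex $Z_i$ two vectors adding up with no loss of norm must point in the same direction, one gets $\supp(F(tv))\subset J$ and in fact $F(tv)=\sum_{i\in J}c_i(t)z_i$ for continuous functions $c_i\colon[0,1]\to[0,\infty)$ with $c_i(0)=0$, $c_i(1)=1$ and $\sum_{i\in J}c_i(t)\|z_i\|=t$; comparing $F(tv)$ with $F(sv)$ the same way shows the $c_i$ are comonotone, hence nondecreasing. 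Thus $F$ maps $[0,v]$ into the finite-dimensional space $H$, but the target $B_H\cong B_{\ell_1^n}$ is not strictly convex, so these elementary constraints do not by themselves force $c_i\equiv t$.

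The finite-dimensional rigidity is the step I expect to be the main obstacle. The finitely many vertices $\pm u_i$ of $B_H$ are extreme in $B_Z$, so their preimages $\pm v_i:=F^{-1}(\pm u_i)$ already satisfy $F(tv_i)=tu_i$. The plan is to use this, together with Brouwer's invariance of domain (Proposition~\ref{Brower}) on the relevant finite-dimensional slice and Proposition~\ref{prop-surject}, to show that $F^{-1}(B_H)$ equals the ball $B_{X_0}$ of the finite-dimensional subspace $X_0:=\spn\{v_i:i\in J\}$ (so in particular $\dim X_0=n$), and that $F|_{B_{X_0}}\colon B_{X_0}\to B_H$ is a non-expansive bijection with $F(S_{X_0})=S_H$. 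Inside this finite-dimensional picture I would prove by induction on $n$ that the radial identity holds for every $v$ with $|\supp F(v)|\le n$: the base case $n=1$ is the extreme-point case; for the inductive step, every $x\in S_{X_0}$ with $|\supp F(x)|<n$ is covered by the induction hypothesis applied to the original $F$, so by Lemma~\ref{conv-smooth-prel} the map $F$ is an isometry on the cone over all such $x$, whose image is the union of the lower-dimensional coordinate slices $B_{H_i}\cong B_{\ell_1^{n-1}}$, and since these span $B_H$ as a convex hull, Mankiewicz's theorem (Proposition~\ref{Mankiewicz}) upgrades this to $F|_{B_{X_0}}$ being an affine isometry; in particular $F(tv)=tz$. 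This finite-dimensional reduction is the genuinely hard part, because $X$ is an arbitrary Banach space and all the rigidity must be extracted from the $\ell_1$-sum-of-strictly-convex geometry of the target.

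Finally, for $z=F(v)$ with $|\supp(z)|=\infty$ I would pass to a limit. For finite $J\subset\supp(z)$ put $z^J:=\bigl\|\sum_{i\in J}z_i\bigr\|^{-1}\sum_{i\in J}z_i\in S_Z$, which has finite support, and $v^J:=F^{-1}(z^J)$; non-expansiveness of $F$ together with absolute convergence of $\sum_i\|z_i\|$ gives $v^J\to v$ and hence $F(tv^J)\to F(tv)$, while the finite-support case gives $F(tv^J)=tz^J\to tz$. Thus $F(tv)=tz$ for all $t\in[-1,1]$. Combining the three cases, the radial identity holds on all of $F^{-1}(S_Z)$, and Lemma~\ref{preim-of-sph} completes the proof.
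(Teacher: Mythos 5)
Your overall skeleton is the same as the paper's (establish the radial identity $F(tv)=tF(v)$ for all $v\in F^{-1}(S_Z)$, treat finite support first and pass to the limit for countable support, then invoke Lemma \ref{preim-of-sph}), and both your elementary observations for finite support (that $\supp F(tv)\subset J$ and $F(tv)=\sum_i c_i(t)z_i$ with nondecreasing $c_i$) and your limiting argument for infinite support are correct. But the finite-dimensional rigidity that you yourself flag as ``the genuinely hard part'' is exactly the content of the paper's Lemma \ref{lem-old-claim}, and the two mechanisms you propose for it do not deliver it. First, the claim that $F^{-1}(B_H)=B_{X_0}$ with $\dim X_0=n$ and $F(S_{X_0})=S_H$ (in your notation $H=\spn\{u_i\}$, $X_0=\spn\{v_i\}$) cannot be extracted from Proposition \ref{Brower} and Proposition \ref{prop-surject} alone: those give the surjectivity half (the paper's inclusion $\partial V_n\subset F(U_n)$) only after one already knows the containment $F(B_{X_0})\subset B_H$, and that containment is the crux. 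In the paper it is proved by forcing a chain of inequalities into equalities, where the strict inequality supplied by the strict projector $P_{z,z^*}$ (Lemmas \ref{strictprojector} and \ref{projector}) produces the contradiction; the same computation also yields the domain-side identity $\|\sum_i a_i x_i\|=\sum_i|a_i|$ (formula \eqref{eq*}), which you never establish but tacitly need both for $\dim X_0=n$ and for any isometry conclusion between $B_{X_0}$ and $B_H$. You introduce the projector $P_{u,u^*}$ and then never use it, and the paper's argument for the containment also relies on the induction hypothesis (to evaluate $F$ on the normalized truncation $r_1$), so your ordering ``prove step 1, then induct inside it'' is incomplete and partly circular.

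The second flaw is the use of Mankiewicz. Proposition \ref{Mankiewicz} requires a bijective isometry between \emph{convex} sets with \emph{non-empty interior}; the set on which your induction hypothesis plus Lemma \ref{conv-smooth-prel} gives an isometry is the cone over the points of $S_{X_0}$ with smaller support, whose image is a finite union of lower-dimensional coordinate slices of $B_H$ -- a set that is neither convex nor has interior in the $n$-dimensional space. Knowing that its convex hull is $B_H$ does not make Mankiewicz applicable, and an isometry on that union does not by itself determine $F$ on the rest of $B_{X_0}$. The paper's route is the reverse: it first proves $F(U_n)=V_n$, then uses that $U_n$ and $V_n$ are isometric copies of the $\ell_1^n$-ball (via \eqref{eq*}) so that the plasticity of totally bounded metric spaces \cite{NaiPioWing} makes $F|_{U_n}$ an isometry, and only then applies Mankiewicz to the convex body $U_n$ to get linearity and hence formula \eqref{*}. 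Without a correct proof of this finite-dimensional lemma your argument for the theorem is not complete.
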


The essence of the proof consists in Lemma \ref{lem-old-claim} below which analyzes the behavior of $F$ on some typical finite-dimensional parts of the ball.

Under the conditions of Theorem \ref{teo:X+Y} consider a finite subset $J \subset I$, $|J| = n$ and pick collections $z = (z_i)_{i \in J}$,  $z_i \in S_{Z_i}, i \in J$, $z^* = (z_i^*)_{i \in J}$, where each  $z_i^*\in S_{{Z_i}^*}$ is a supporting functional for the corresponding $z_i$. Denote $ x_i = F^{-1}(z_i) \in S_X$.
Denote by $U_n$ and $\partial U_n$ the unit ball and the unit sphere of $\spn\{x_i\}_{i\in  J}$ respectively. Let $V_n$ and $\partial V_n$ be the unit ball and the unit sphere of $\spn\{z_i\}_{i \in J}$.

\begin{lem}\label{lem-old-claim}
For every  collection $(a_i)_{i \in  J}$ of reals with $\sum_{i\in  J}{a_i x_i}\in U_n$
\begin{equation} \label{eq*}
\left\|\sum_{i\in  J}{a_i x_i}\right\| = \sum_{i\in  J}|a_i|,
\end{equation}
(which means in particular that $U_n$ isometric to the unit ball of $n$-dimensional $\ell_1$), and
\begin{equation} \label{*}
F\left(\sum_{i\in  J}{a_i x_i}\right) = \sum_{i\in  J}{a_i z_i}.
\end{equation}
\end{lem}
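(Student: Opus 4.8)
The plan is to argue by induction on $n=|J|$, the case $n=1$ being exactly Proposition~\ref{lem-old-old}(3) (since $z_1\in S_{Z_1}=\ex(B_{Z_1})$ is an extreme point of $B_Z$, so $F(a_1x_1)=a_1z_1$ for $|a_1|\le 1$, while $\|a_1x_1\|=|a_1|$ is trivial). So let $n\ge 2$ and assume the lemma for every subset of $I$ of cardinality $<n$. First I record what I shall use. Each $x_i=F^{-1}(z_i)$ lies in $S_X$ by Proposition~\ref{lem-old-old}(2); for $i\ne j$ and any signs one has $\|\eps_ix_i-\eps_jx_j\|=2$, since $F$ is non-expansive and $\|\eps_iz_i-\eps_jz_j\|=\|\eps_iz_i\|+\|\eps_jz_j\|=2$ by the $\ell_1$-additivity of the norm of $Z$; and $F(tx_i)=tz_i$ for $t\in[-1,1]$ by Proposition~\ref{lem-old-old}(3). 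By the same $\ell_1$-additivity, $\spn\{z_i:i\in J\}$ with the norm of $Z$ is isometric to $\ell_1^n$ via the correspondence $z_i\leftrightarrow e_i$; call this isometry $g\colon V_n\to B_{\ell_1^n}$.

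Using the inductive hypothesis I would next show that $\{x_i\}_{i\in J}$ is linearly independent: otherwise $x_k=\sum_{i\in J\setminus\{k\}}c_ix_i$ for some $k$, and as $\|x_k\|=1$ this combination lies in the unit ball of $\spn\{x_i:i\in J\setminus\{k\}\}$, so by the lemma for $J\setminus\{k\}$ we would get $z_k=F(x_k)=\sum_{i\in J\setminus\{k\}}c_iz_i$, which is impossible since the left side is supported on $\{k\}$ and the right side on $J\setminus\{k\}$. Hence $\dim\spn\{x_i\}_{i\in J}=n$, the linear map $\psi\colon\ell_1^n\to\spn\{x_i\}_{i\in J}$, $\psi(a)=\sum_{i\in J}a_ix_i$, is a bijection, and it is non-expansive on $B_{\ell_1^n}$ because $\|\psi(a)-\psi(a')\|\le\sum_{i\in J}|a_i-a'_i|\,\|x_i\|=\|a-a'\|_1$.

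Now form the strict projector $P:=P_{z,z^*}$ of Lemma~\ref{projector} and the non-expansive maps $\Phi:=P\circ F\colon B_X\to V_n$ and $h:=g\circ\Phi\circ\psi\colon B_{\ell_1^n}\to B_{\ell_1^n}$. Then $h(0)=0$ and $h(te_i)=te_i$ for $t\in[-1,1]$; moreover, if $p\in B_{\ell_1^n}$ has some zero coordinate then $\psi(p)$ is a combination of the $x_i$ over a proper subset of $J$ lying in the corresponding lower-dimensional unit ball, so the inductive hypothesis gives $F(\psi(p))=\sum_ip_iz_i\in V_n$ and hence $h(p)=p$. Thus $h$ restricts to the identity on the ``skeleton'' $Y:=\{p\in B_{\ell_1^n}\colon \prod_{i\in J}p_i=0\}$. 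The crux is to prove that $h$ is surjective onto $B_{\ell_1^n}$. The idea is to look at each of the $2^n$ closed ``orthant simplices'' $\Del_\eps=\{p:\eps_ip_i\ge 0,\ \sum_i\eps_ip_i\le 1\}$ ($\eps\in\{-1,1\}^J$): $h$ is already the identity on $\Del_\eps\cap Y$, which contains every facet of $\Del_\eps$ except the ``outer'' one, so Brouwer's invariance of domain (Proposition~\ref{Brower}) together with a degree/continuity argument shows $h(\Del_\eps)\supseteq\mathrm{int}\,\Del_\eps$; as the union of these interiors is dense in $B_{\ell_1^n}$ and $h(B_{\ell_1^n})$ is compact, $h$ is onto. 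Equivalently, one checks that $h(B_{\ell_1^n})$ is homeomorphic to $B_{\ell_1^n}$ and contains $S_{\ell_1^n}$ and applies Proposition~\ref{prop-surject}. Controlling the behaviour of $h$ on the outer facets — the only part of $\partial\Del_\eps$ not reached by the inductive hypothesis — is the main obstacle here.

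Granting surjectivity, the compactness of $B_{\ell_1^n}$ forces the non-expansive surjection $h$ to be a bijective isometry (the finite-dimensional Expand--Contract plasticity recalled in the Introduction, see \cite{NaiPioWing}); fixing $0$, it extends to a linear isometry of $\ell_1^n$ by Proposition~\ref{Mankiewicz}, and since such an isometry permutes the extreme points $\pm e_i$ of $B_{\ell_1^n}$ while $h(e_i)=e_i$ for all $i$, it is the identity. Hence $h=\mathrm{id}_{B_{\ell_1^n}}$, i.e.\ $\Phi\circ\psi=g^{-1}$. From $\|\psi(a)\|\ge\|\Phi(\psi(a))\|=\|g^{-1}(a)\|=\|a\|_1\ge\|\psi(a)\|$ we obtain \eqref{eq*} (and $U_n=\psi(B_{\ell_1^n})$, so every $w\in U_n$ has the form $\sum_{i\in J}a_ix_i$ with $\sum_{i\in J}|a_i|\le 1$). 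For such a $w$, the chain $\sum_{i\in J}|a_i|=\|\Phi(w)\|\le\|F(w)\|\le\|w\|=\sum_{i\in J}|a_i|$ is an equality; writing $F(w)=(y_i)_{i\in I}$ and using $\|\Phi(w)\|=\sum_{i\in J}|z_i^*(y_i)|$ and $\|F(w)\|=\sum_{i\in I}\|y_i\|$, equality forces $y_i=0$ for $i\notin J$ and $|z_i^*(y_i)|=\|y_i\|$ for $i\in J$; by strict convexity of $Z_i$ (where $z_i^*$ attains $\pm1$ on $B_{Z_i}$ only at $\pm z_i$) this last condition means $y_i\in\R z_i$. Therefore $F(w)\in\spn\{z_i:i\in J\}$, so $F(w)=\Phi(w)=\sum_{i\in J}a_iz_i$, which is \eqref{*}.
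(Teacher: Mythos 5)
There is a genuine gap, and it sits exactly where you yourself flag it: the surjectivity of $h$. The properties you have actually established for $h$ --- non-expansive from $B_{\ell_1^n}$ to $B_{\ell_1^n}$, $h(0)=0$, and $h=\mathrm{id}$ on the skeleton $Y$ --- do \emph{not} imply that $h$ is onto, so no degree or invariance-of-domain argument can be run on the orthant simplices without first controlling $h$ on the outer facets, which is precisely what is missing. A concrete witness in dimension $2$: the map $T(p)=p-\lam\min(|p_1|,|p_2|)\,(\mathrm{sign}(p_1),\mathrm{sign}(p_2))$ with small $\lam>0$ is non-expansive on $(\R^2,\|\cdot\|_1)$, fixes $0$, fixes the whole skeleton $\{p: p_1p_2=0\}$, maps $B_{\ell_1^2}$ into itself, yet its image misses the point $(\tfrac12,\tfrac12)$. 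So everything after the sentence ``a degree/continuity argument shows $h(\Delta_\eps)\supseteq\mathrm{int}\,\Delta_\eps$'' (plasticity of the compact ball, Mankiewicz, $h=\mathrm{id}$, and your derivations of \eqref{eq*} and \eqref{*}) is conditional on an unproved claim, and the claim cannot be salvaged from the listed properties of $h$ alone: one must use the additional structure of $F$.

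The extra resource the paper uses is the injectivity of $F$ on all of $B_X$, which your composition with the projector $P_{z,z^*}$ discards. The paper first proves, \emph{before} any surjectivity statement, that $F(U_n)\subset V_n$: it establishes \eqref{eq*} by a two-line non-expansiveness argument ($\|\sum_{i\in J}a_ix_i\|\ge\|F(\sum_{i\in J_{n-1}}a_ix_i)-F(-a_mx_m)\|=\sum_{i\in J}|a_i|$, using the induction hypothesis --- note \eqref{eq*} needs none of the surjectivity machinery you defer it to), and then forces $F(r)\in V_n$ through equality chains that become strict, via Lemmas \ref{projector} and \ref{strictprojector}, if $F(r)\notin V_n$. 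With $F(U_n)\subset V_n$ in hand, the inclusion $\partial V_n\subset F(U_n)$ is proved by contradiction: for a boundary point $\sum_{i\in J}t_i\notin F(U_n)$ one takes $\tau=F^{-1}(\sum_{i\in J}t_i)$, shows by a similar strict-projector chain --- but with the projector $P_{t,t^*}$ adapted to the point $t$, not to $z$ --- that the whole curve $\{F(\alpha\tau):\alpha\in[0,1]\}$ lies in $V_n$; since $F|_{U_n}$ is an injective continuous map into $V_n$, Proposition \ref{Brower} shows $F(U_n)$ contains a relative neighborhood of $0$ in $V_n$, so the curve meets $F(U_n)$, contradicting injectivity because $\alpha\tau\notin U_n$. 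Then Proposition \ref{prop-surject} gives $F(U_n)=V_n$, and the finish (finite-dimensional plasticity plus Mankiewicz) is the same as yours. If you want to keep your $h$-formulation, you must first prove $F(U_n)\subset V_n$ so that on $U_n$ your $h$ coincides with $g\circ F\circ\psi$ and inherits injectivity, and then reinstate an argument of the above type; as written, the proposal's central step is a restatement of the difficulty rather than a proof.
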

\begin{proof}
We will use the induction in $n$. Recall,  that $z_i \in \ex B_Z$.
This means that for $n = 1$, our Lemma follows from item (3)  of  Proposition \ref{lem-old-old}. Now assume the validity of  Lemma for index sets of $n-1$ elements, and let us prove it for $|J| = n$. Fix an $m \in J$ and denote $J_{n-1} = J \setminus \{m\}$,
At first, let us prove that
\begin{equation} \label{eq1}
F(U_n) \subset V_n.
\end{equation}
To this end, consider $r \in U_n$. If $r$ is of the form $a_m x_m$ the statement follows from (3)  of  Proposition \ref{lem-old-old}. So we must consider $r = \sum_{i\in  J}{a_i x_i}$, $\sum_{i\in  J}{|a_i|}\leq 1$ with $\sum_{i\in  J_{n-1}}|a_i| \neq 0$. Denote the expansion of $F(r)$ by $F(r) = (v_i)_{i\in I}$. For the element
$$
r_1 = \sum_{i\in  J_{n-1}}{\frac{ a_i}{\sum_{j\in  J_{n-1}}|a_j|}x_i}
$$
by the induction hypothesis
$$
F(r_1) =   \sum_{i\in  J_{n-1}}\frac{a_i}{\sum_{j\in  J_{n-1}}|a_j|}  z_i.
$$
Moreover, on the one hand,
$$
\left\|\sum_{i\in  J}{a_i x_i}\right\|\leq \sum_{i\in  J}{|a_i|}.
$$
On the other hand,
\begin{align*}
\left\|\sum_{i\in  J}{a_i x_i}\right\|&=\left\|\sum_{i\in  J_{n-1}}{a_i x_i}- (-a_m x_m)\right\|\geq \left\|F\left(\sum_{i\in  J_{n-1}}{a_i x_i}\right)- F(-a_m x_m)\right\| \\
&=\left\|\sum_{i\in J_{n-1}}a_i z_i- a_m z_m\right\|=\sum_{i\in  J}{|a_i|}.
\end{align*}
Thus, \eqref{eq*} is demonstrated and we may write the following inequalities:
\begin{align*}
2 &=  \|F(r_1) - \frac{a_m}{|a_m|}z_m\|  \leq \left\|F(r_1) - \sum_{i\in J} v_i \right\|+\left\|\sum_{i\in J} v_i -F\left(\frac{a_m}{|a_m|}x_m\right)\right\| \\
&= \|F(r_1) - F(r)\|+\left\|F(r)-F\left(\frac{a_m}{|a_m|}x_m\right)\right\|- 2\left\|\sum_{i\in I \setminus J} v_i\right\|  \\
&\leq \|F(r_1) - F(r)\|+\left\|F(r)-F\left(\frac{a_m}{|a_m|}x_m\right)\right\|\\
&\leq \left\| \sum_{i\in  J_{n-1}}{\frac{a_i}{\sum_{j\in  J_{n-1}}|a_j|} x_i} - \sum_{i\in  J}{a_i x_i}\right\|+\left\|\sum_{i\in  J}{a_i x_i} - \frac{a_m}{|a_m|} x_m \right\|  \\ &\leq \sum_{i\in  J_{n-1}}\left| a_i -  \frac{a_i}{ \sum_{j\in  J_{n-1}}|a_j|} \right| + |a_m| + \sum_{i \in  J_{n-1}}| a_i| + \left|a_m - \frac{a_m}{|a_m|}\right|  \\
&= \sum_{i\in  J_{n-1}}| a_i| \left(1 + \left| 1 -  \frac{1}{ \sum_{j\in  J_{n-1}}|a_j|} \right| \right) +|a_m|\left(1 + \left| 1 -  \frac{1}{|a_m|} \right| \right) = 2.
\end{align*}
So, all the inequalities in this chain are in fact equalities, which implies that
$$
F(r) = \sum_{i\in J}v_i \, \textrm{ and } \, \|F(r_1) - F(r)\|+\left\|F(r)-F\left(\frac{a_m}{|a_m|}x_m\right)\right\|=2.
$$

 Remind that our goal is to check that $F(r)\in V_n$. Suppose by contradiction that $F(r)=\sum_{i\in J} v_i \notin V_n$ and denote for reader's convenience by $s=\sum_{j\in J_{n-1}}|z_j^*(v_j)|$.
  Then in notations of Lemma \ref{projector}
 \begin{align*}
2&= \left\|F\left(\sum_{i\in J_{n-1}}\frac{z_i^*(v_i)}{s}{x_i}\right) - F(r)\right\|+\left\|F(r)-F\left(\frac{z_m^*(v_m)}{|z_m^*(v_m)|}x_m\right)\right\| \\
 &=\left\|\sum_{i\in J_{n-1}}\left(\frac{z_i^*(v_i)}{s}z_i-v_i\right)-v_m\right\|+\left\|\sum_{i\in J_{n-1}} v_i + v_m-\frac{z_m^*(v_m)}{|z_m^*(v_m)|}z_m\right\| \\
  &> \left\|P_{z, z^*}\left(\sum_{i\in J_{n-1}}\left(\frac{z_i^*(v_i)}{s}z_i-v_i\right)-v_m \right)\right\| \\
  &+\left\|P_{z, z^*}\left(\sum_{i\in J_{n-1}} v_i + v_m - \frac{z_m^*(v_m)}{|z_m^*(v_m)|}z_m\right)\right\|  \\
  &=\left\|\sum_{i\in J_{n-1}}\left(\frac{z_i^*(v_i)}{s}-z_i^*(v_i)z_i\right)- z_m^*(v_m)z_m\right\| \\
  &+ \left\|\sum_{i\in J_{n-1}}z_i^*(v_i)z_i + x_{m}^*(v_m)-\frac{z_m^*(v_m)}{|z_m^*(v_m)|}z_m \right\| = \sum_{i\in J_{n-1}}\left| z_i^*(v_i) -  \frac{z_i^*(v_i)}{s} \right|  \\ &+ |z_m^*(v_m)| + \sum_{i\in J_{n-1}}| z_i^*(v_i)| + \left|z_m^*(v_m) - \frac{z_m^*(v_m)}{|z_m^*(v_m)|}\right| \\
 &=\sum_{i\in J_{n-1}}| z_i^*(v_i)| \left(1 + \left| 1 -  \frac{1}{s} \right| \right) + |z_m^*(v_m)|\left(1 + \left| 1 -  \frac{1}{|z_m^*(v_m)|} \right| \right) = 2.
 \end{align*}
Observe, that we have written the strict inequality in this chain because of Lemmas \ref{projector} and \ref{strictprojector}. The above contradiction means that our assumption was wrong, that is
 \begin{equation}\label{eq1}
  F(U_n)\subset V_n.
 \end{equation}
 Further we are going to prove the inclusion
 \begin{equation}\label{eq2}
   \partial V_n \subset F(U_n).
 \end{equation}
We will argue by contradiction. Let there is a point $\sum_{i\in J} t_i\in \partial V_n \setminus F(U_n)$ and denote $ \tau = F^{-1}(\sum_{i\in J} t_i)$. Then $||\sum_{i\in J} t_i|| = 1$ and $\tau \notin U_N$. Rewrite
$$
\sum_{i\in J} t_i = \sum_{i\in J}\|t_i\|\hat{t_i}, \quad \hat{t_i}\in S_{Z_i}.
$$
Pick some supporting functionals ${t_i}^*$ in the points $\hat{t_i}$, $i\in J$ and denote  $t=(\hat{t_i})_{i\in J}$ and $t^*=({t_i}^*)_{i\in J}$.
Let us  demonstrate that $F(\alpha \tau) \in V_n$  for all $\alpha\in[0,1]$. Indeed, if $F(\alpha \tau) \notin V_n$  for some $\alpha$,   denoting $F(\alpha \tau) =  \sum_{i\in I}w_i$,  we deduce from Lemmas \ref{projector} and \ref{strictprojector} the following contradiction
\begin{align*}
1&=\|0-\alpha \tau\|+\|\alpha \tau-\tau\|\geq\left\|0-\sum_{i\in I}w_i\right\|+\left\|\sum_{i\in I}w_i-\sum_{i\in J} t_i\right\| \\
&=2\left\|\sum_{i\in I \setminus J}w_i\right\|+ \left\|\sum_{i\in J}w_i\right\|+\left\|\sum_{i\in J}w_i -\sum_{i\in J}t_i\right\| \\
&>\left\|P_{t,t^*}\left(\sum_{i\in J}w_i\right)\right\|+\left\|P_{t,t^*}\left(\sum_{i\in J}w_i\right)-\sum_{i\in J}t_i\right\|\\
&=\left\|\sum_{i\in J} t_i^*(w_i)\hat{t_i}\right\|+\left\|\sum_{i\in J} t_i^*(w_i)\hat{t_i}-\sum_{i\in J}t_i\right\| \\
&=\sum_{i\in J}|t_i^*(w_i)|+\sum_{i\in J}\left|\|t_i\|-t_i^*(w_i)\right|\geq \sum_{i\in J}\|t_i\| = 1.
\end{align*}
 Note that $F(U_n)$ contains a relative neighborhood of 0 in $V_n$ (here we use item (1) of Proposition \ref{lem-old-old}  and Proposition \ref{Brower}),  so the continuous curve $\{F(\alpha \tau) \colon \alpha \in [0,1]\}$  connecting $0$ with $\sum_{i\in J}t_i$  in  $V_n$  has a non-trivial intersection with  $F(U_n)$.  This implies that there is a $a\in [0,1]$ such that $F(a\tau) \in F(U_n)$. Since $a\tau \notin U_n$ this contradicts the injectivity of $F$. Inclusion \eqref{eq2} is proved.
Now, inclusions  \eqref{eq1} and \eqref{eq2} together with Lemma \ref{prop-surject} imply $F(U_n) = V_n$. Observe, that $U_n$ and $V_n$ are isometric to the unit ball of $n$-dimensional $\ell_1$, so they can be considered as two copies of the same compact metric space. Hence Expand-Contract plasticity of totally bounded metric spaces \cite{NaiPioWing} implies that every bijective non-expansive map from $U_n$ onto $V_n$  is an isometry. In particular, $F$ maps $U_n$ onto $V_n$ isometrically. Finally, the application of Lemma \ref{Mankiewicz} gives us that the restriction of $F$ to $U_n$ extends to a linear map from $\spn\{x_i, i\in J\}$ to $\spn\{z_i,$ $i\in J\}$, which evidently implies \eqref{*}.
\end{proof}

\begin{proof}[Proof of Theorem \ref{teo:X+Y}]
 Our aim is to apply Lemma \ref{preim-of-sph}. To satisfy the conditions of the lemma, for every $z\in S_Z$ we must regard $y = F^{-1}(z)$ and check that for every $t\in [-1,1]$
 \beq \label{eq: Fty}
 F(ty) = tz.
 \eeq
 To this end let us denote $J_z = \supp(z)$, and write
 $$
 z =\sum_{i\in J_z} z_i = \sum_{i\in J_z} \|z_i\|\tilde{z}_i,
 $$
 where $\tilde{z}_i \in S_{Z_i}$. Let us also denote for all $i\in J_{z}$
$$
 x_i := F^{-1}(\tilde{z}_i) \in S_X.
$$
For $J_{z}$ being finite formula  \eqref{*} of Lemma \ref{lem-old-claim} implies that
$$
y = F^{-1}(z) =  F^{-1}\left(\sum_{i\in J_z} \|z_i\|\tilde{z}_i\right) = \sum_{i\in J_z} \|z_i\|x_i, \quad \textrm{and}
$$
$$
 F(ty) = F\left(\sum_{i\in J_z} t \|z_i\|x_i\right) = \sum_{i\in J_z} t \|z_i\|\tilde{z}_i = tz,
$$
which demonstrates \eqref{eq: Fty} in this case. It remains to demonstrate \eqref{eq: Fty} for the case of countable  $J_z$. In this case we can write  $J_z = \{i_1, i_2, \ldots\}$ and consider its finite subsets $J_n = \{i_1, i_2, \ldots, i_n\}$. For these finite subsets $\sum_{i\in J_n} \|z_i\| \le 1$, so $\sum_{i\in J_n} \|z_i\|x_i \in U_n := B_{\spn\{x_i\}_{i\in  J_n}}$, and  we may deduce from Lemma \ref{lem-old-claim} that
$$
F\left(\sum_{i\in J_n} \|z_i\|x_i\right) = \sum_{i\in J_n} \|z_i\|\tilde{z}_i.
$$
Passing to limit as $n \to \infty$ we get
$$
F\left(\sum_{i\in J_z} \|z_i\|x_i\right) = \sum_{i\in J_z} \|z_i\|\tilde{z}_i = z, \, \textrm{ i.e. } y = F^{-1}(z) = \sum_{i\in J_z} \|z_i\|x_i.
$$
One more application of formula \eqref{*} of Lemma \ref{lem-old-claim} gives us
$$
F\left(\sum_{i\in J_n} t \|z_i\|x_i\right) = \sum_{i\in J_n} t \|z_i\|\tilde{z}_i,
$$
which after passing to limit ensures \eqref{eq: Fty}:
$$
 F(ty) = F\left(\lim_{n \to \infty}\sum_{i\in J_n} t \|z_i\|x_i\right) = \lim_{n \to \infty} \sum_{i\in J_n} t \|z_i\|\tilde{z}_i  = \sum_{i\in J_z} t \|z_i\|\tilde{z}_i = tz.
$$
This fact demonstrates applicability of Lemma \ref{preim-of-sph} to our $F$ and thus completes the proof of the theorem.
\end{proof}

\bibliographystyle{amsplain}

\end{document}